\documentclass[12pt]{article}

\usepackage{amsmath}
\usepackage{amsfonts}
\usepackage{amssymb}
\usepackage{amsthm}
\usepackage{mathrsfs}
\usepackage{graphicx}
\usepackage{enumerate}
\usepackage{xspace}
\usepackage{colortbl}
\usepackage{parskip}
\usepackage{xcolor} 
\usepackage{geometry} 
\usepackage{hyperref}

\theoremstyle{plain}
\newtheorem{lemma}{\textbf{Lemma}}

\newtheorem{remark}{\textbf{Remark}}

\newtheorem{theorem}[lemma]{\textsc{Theorem}}

\newcommand{\be}{\begin{equation}}
\newcommand{\ee}{\end{equation}}

\newcommand{\clf}{{\mathcal F}}

\newcommand{\E}{{\mathbb E}}

\renewcommand{\P}{{\mathbb P}}

\newcommand{\R}{{\mathbb R}}

\newcommand{\Ind}{{\mathbf 1}}

\definecolor{blu}{rgb}{.01,.01,1}
\definecolor{dblu}{rgb}{.01,.01,.85}
\definecolor{dred}{rgb}{.7,.01,.01}
\definecolor{red}{rgb}{1,0,0}
\definecolor{grn}{rgb}{.01,.6,.05}

\newcommand{\bt}{\begin{theorem}}
\newcommand{\et}{\end{theorem}}
\newcommand{\bl}{\begin{lemma}}
\newcommand{\el}{\end{lemma}}

\newcommand{\Bb}{\mathbb B}
\newcommand{\Kb}{\mathbf K}

\newcommand{\Fx}{{\mathbf F}}
\newcommand{\Gx}{{\mathbf G}}
\newcommand{\Hx}{{\mathbf H}}

\newcommand{\CC}{{{\mathbf C}}}

\newcommand{\CW}{{{\mathbf W}}}
\newcommand{\CX}{{{\mathbf X}}}
\newcommand{\CY}{{{\mathbf Y}}}
\newcommand{\CZ}{{{\mathbf Z}}}

\newcommand{\Lb}{{{\bf L}}}

\newcommand{\CV}{\mathbf V}

\newcommand{\xa}{\mathbf a}
\newcommand{\xb}{\mathbf b}
\newcommand{\xu}{\mathbf u}
\newcommand{\xv}{\mathbf v}
\newcommand{\xx}{{\mathbf x}}
\newcommand{\xy}{{\mathbf y}}

\newcommand{\xz}{{\mathbf z}}
\newcommand{\xs}{{\mathbf s}}
\newcommand{\xr}{{\mathbf r}}
\newcommand{\xp}{{\mathbf p}}
\newcommand{\xq}{{\mathbf q}}

\newcommand{\nrm}[1]{\Vert\, #1\,\Vert}

\begin{document} 

\thispagestyle{empty}
\vspace*{15mm}

\begin{center}
    {\large\bfseries Stochastic Approximation in Banach Spaces}\\[2mm]
    {\large\bfseries Without Geometric Constraints}
    
    \vspace{12mm}
    {\large Rajeeva L. Karandikar} \\[1mm]
    and \\[1mm]
    {\large B. V. Rao}
    
    \vspace{6mm}
    {\itshape Chennai Mathematical Institute}\\
    {\itshape H1 Sipcot IT Park, Siruseri, TN 603103, India.}\\ 
    \vspace{2mm}
    {\ttfamily rkarandikar@gmail.com, bvrao@cmi.ac.in}
\end{center}

\vspace{10mm}

\begin{abstract}
The thrust of this article is to show that on all Banach spaces, stochastic approximation holds when the noise sequence is an i.i.d. sequence with mean 0, \emph {without} imposing any condition on the geometry of the space. Also, the same is true when the noise is a sequence of independent random variables under appropriate conditions on the moment. In this case, we need to require that the noise sequence is \emph{tight}.
\end{abstract}

\newpage
\pagestyle{plain}

\section{Introduction}

Let $\Bb$ be a Banach space and 
let $\Gx:\Bb\mapsto \Bb$ be a function such that the equaton $\Gx(x)=0$ has a unique solution (denoted by $\xx^*$). Under appropriate conditions on the function $\Gx$
it can be shown that 
 for any $\xu_0\in\Bb$, the sequence
\[\xu_{n+1}=\xu_n-\Gx(\xu_n),\;\;n\ge 0\]
converges to $\xx^*$ . For example, $\Gx$ could be $G(\xx)=\xx-\Fx(\xx)$ where $\Fx$ is a contraction.

 Consider the scenario where given $\xu_n$, $\Gx(\xu_n)$ is not directly observable, but we can observe %$\eta_n$, which is 
 $\Gx(\xu_n)$ corrupted by noise. Let us write the observed quantity as $\widehat{\Gx}(\xu_n)$ - so that $\Hx_n=\widehat{\Gx}(\xu_n)-{\Gx}(\xu_n)$ is the noise.

One can  modify the successive approximation algorithm used above as follows : Let $\CX_0$ be chosen arbitrarily and, having observed $\CX_n$ at time $n$, instead of taking $\CX_{n+1}=\CX_n-\widehat{\Gx}(\CX_n)$, we take $\CX_{n+1}$ to be a convex combination of $\CX_n-\widehat{\Gx}(\CX_n)$ and $\CX_n$, which leads to the following:  for a sequence of constants $\beta_n\in (0,1)$ converging to zero, we define
 \be\label{U0} \CX_{n+1}=\CX_n-\beta_n\widehat{\Gx}(\CX_n),\;\;n\ge 0,\ee
where now $\Hx_n=\widehat{\Gx}(\CX_n)-{\Gx}(\CX_n)$ is the (observation) noise.
We would like to explore conditions on the function $\Gx$, the noise sequence $\{\Hx_n:n\ge 0\}$, the sequence $\{\beta_n:n\ge 0\}$ (hence forth called the step size) that would ensure that the observed approximation $\{\CX_{n+1}:n\ge 0\}$ converges to $x^*$.

In the finite dimensional case, these and similar questions have been considered for last 75 years, starting with 
Robbins and Monro \cite{RM}, 
Kiefer and Wolfowitz \cite{Kiefer}. Also, see the references given below. 
 
 In \cite{KR} we had considered this in the setting of a Banach space and obtained  conditions, which included conditions on the {\em geometry} of the Banach space, as is customary,  in dealing with limit theorems for sequence of Banach spaces valued random variables.

However, for a given application, the Banach space would be determined by the problem and we cannot change the same. For example, it could be $C[0,1]$, an infinite dimensional linear space that is a Banach space but not a Hilbert space. And  $C[0,1]$ does not even satisfy the Radon-Nikodym property!
 
In this paper, we explore this question when the underlying space is an arbitrary  Banach space $\Bb$. {\em We do not impose any conditions on the geometry of $\Bb$}. The main observation is that the results in \cite{KRV} for the finite dimensional case hold {\em Mutatis Mutandis} when the noise sequence is i.i.d. with zero mean. When we have independent noise, which may not be i.i.d.,  we need to add condition of {\em tightness} of the noise sequence. 

Let us note that when $\Gx(\xx)=(\xx-\mu)$,  $\beta_n=\frac{1}{(n+1)}$, the noise $\{\CW_n:n\ge 1\}$ is i.i.d. with mean 0 and we start with $\CX_0=0$, then it can be seen that $\CX_{n}=\frac{1}{n}(\CW_1+\CW_2+\ldots+\CW_{n})$ and thus convergence of the (almost sure) approximation of the observation sequence is same as the Strong Law of Large Numbers. 
That SLLN holds for arbitrary Banach spaces was proven by J. Hoffmann-Jørgensen \cite{Hoffmann} about 50 years ago!

\section{Main Result}
Here and in what follows, $\nrm{\xx}$ is the norm on the Banach space $\Bb$. We are going to assume that the function $\Gx:\Bb \mapsto \Bb$ satisfies 
\be\label{U2} \text{$\exists \tau>0,\,\rho<1,\; \xx^*\in\Bb$ such that }\;\;\;\nrm{\xx-\xx^*-\textstyle\tau \Gx(\xx)}\le \rho\nrm{\xx-\xx^*}\;\;\;\forall x\in\Bb.\ee
 This of course implies that $\xx^*$ is the {\em unique} solution to $\Gx(x)=0$. As noted in \cite{KR}, the condition \eqref{U2} includes the case (i)  if $\Fx:\Bb\mapsto\Bb$ is a contraction and $\Gx(\xx)=\xx-\Fx(\xx)$, or (ii) if $\exists$ constants $0<c_1<c_2<\infty$ such that  $\forall \xx, \xy\in\Bb$, $\forall \Lb\in\Bb^*$
on has
\begin{equation*}%\label{R3}
(\Lb(\xx-\xy))(\Lb(  \Gx(\xx)-  \Gx(\xy)))\ge 0,
\end{equation*}
\begin{equation*}%\label{R4}
c_1|\Lb(\xx-\xy)|  \le|\Lb(  \Gx(\xx)-  \Gx(\xy))|\le c_2|\Lb(\xx-\xy)|.
\end{equation*} 
Also we assume that the \emph{step size} sequence $\{\beta_n:n\ge 0\}\subseteq(0,1)$ appearing in \eqref{U0} satisfies 
\begin{equation}\label{RM}
\textstyle\lim_{n\rightarrow \infty}\beta_n=0,\;\;\;\sum_{n=0}^\infty\beta_n=\infty.
\end{equation}
Let $\{\CW_n:n\ge 1\}$ be the noise sequence. We assume that its impact on the observation may be inflated by the history of the approximations, namely, observation noise at time $(n+1)$ is  
$\lambda_n(\CX_0,\ldots,\CX_n)\CW_{n+1}$.
We assume that  $\lambda_n:\Bb^{n+1}\mapsto\R$ for $n\ge 0$ satisfies, for some constant $C$ and any $\xu_j\in\Bb$, $0\le j\le n$, $n\ge 0$:
\begin{equation}\label{U5}
|\lambda_n( \xu_0, \xu_1,\ldots, \xu_n)|\le C(1+\max_{0\le k\le n}\nrm{ \xu_k}).\end{equation}
Let $\xx_0\in\R^d$ be fixed, $\CX_0=\xx_0$ and for $n\ge 0$, the stochastic approximation is given by 
\begin{equation}\label{U6}
\CX_{n+1}=\CX_n-\beta_n \bigl(\Gx(\CX_n)+\lambda_n(\CX_0,\ldots,\CX_n)\CW_{n+1}\bigr).\end{equation}
Throughout this paper, we assume that \eqref{U2}, \eqref{RM}, \eqref{U5}, \eqref{U6} are satisfied.\\

Our main result is:
\begin{theorem}\label{Sa-ind}
Suppose that the error $\{\CW_n: n\ge 1\}$ is a sequence of $\Bb$-valued independent random variables.
Suppose that the \emph{step size sequence} $\{\beta_n:n\ge 0\}$ and \emph{the error sequence} $\{\CW_n: n\ge 1\}$ satisfy \emph{any one} of the conditions $(J1)$--$(J3)$ given below \emph{(}in addition to \eqref{U2}, \eqref{RM}, \eqref{U5}, \eqref{U6}\emph{)}. Then 
\be \label{MM} \{\CX_n:n\ge 0\} \text{ converges to }\xx^*\text{ almost surely.}\ee

\begin{itemize}
\item [$(J1)$] Suppose $\{\CW_n: n\ge 1\}$ are i.i.d. with 
\be \label {L1a}\textstyle \E(\nrm{\CW_1}^2)<\infty,\;\;\E(\CW_1)=0,\ee
\be \label {L1b}\textstyle\sum_{n=0}^\infty \beta_n^2<\infty.\ee 
\item [$(J2)$] Suppose $\{\CW_n: n\ge 1\}$ are i.i.d. and for some $\alpha\in [1,2)$ and $0<D<\infty$, we have
\be \label {L2a}\E(\nrm{\CW_1}^\alpha)<\infty,\;\; \E(\CW_1)=0,\ee
\be \label {L2b} \beta_n\le Dn^{-\frac{1}{\alpha}},\;\;\forall n\ge 1.\ee 
\item [$(J3)$] Suppose $\{\CW_n: n\ge 1\}$ is a sequence of independent random variables with mean 0 that is \emph{tight}, \emph{i.e.}, satisfies, $\forall n\ge 1$,
\begin{equation}\label{m1}
\E[\CW_{n}]=0,
\end{equation}
\begin{equation}\label{m2}
\forall \epsilon>0,\,\exists \text{ Compact }\Kb_\epsilon\subseteq\Bb \text{ such that }\P(\CW_{n}\in \Kb_\epsilon)\ge 1-\epsilon.
\end{equation}
Suppose that for some $\alpha\in (1,2]$, we have
\be \label {L3} \sup_{n\ge 1}\E(\nrm{\CW_n}^\alpha)<\infty,\ee
\be \label {L4}\textstyle\sum_{n=0}^\infty \beta_n^\alpha<\infty.\ee 
\end{itemize}
\end{theorem}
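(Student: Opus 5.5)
Throughout, write $\CZ_n=\CX_n-\xx^*$, $M_n=\max_{k\le n}\nrm{\CX_k}$ and $\lambda_n=\lambda_n(\CX_0,\ldots,\CX_n)$. The plan has two parts. First, a deterministic comparison lemma reduces everything to the convergence of the weighted noise series $\sum_n\beta_n\CW_{n+1}$. Second, that convergence is established in an arbitrary Banach space under each of (J1)--(J3).

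\textbf{Step 1 (deterministic reduction).} By \eqref{U2}, for $\beta_n\le\tau$ we can write
\[
\CZ_n-\beta_n\Gx(\CX_n)=(1-\tfrac{\beta_n}{\tau})\CZ_n+\tfrac{\beta_n}{\tau}\bigl(\CZ_n-\tau\Gx(\CX_n)\bigr),
\]
and the triangle inequality gives $\nrm{\CZ_n-\beta_n\Gx(\CX_n)}\le(1-\gamma\beta_n)\nrm{\CZ_n}$ with $\gamma=(1-\rho)/\tau$. The noise is handled by a "moving tail" comparison. Let $S_n=\sum_{k\ge n}\beta_k\CW_{k+1}$, which requires the series to converge a.s. Set $\CY_n=\CZ_n+\lambda_{n}S_n$ (or a variant with stopped $\lambda$). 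One checks that $\CY_{n+1}$ equals $(1-\gamma\beta_n)$-contraction of $\CY_n$ plus error terms. These errors are bounded by $\beta_n\nrm{S_n}(1+M_n)$ and by $|\lambda_{n+1}-\lambda_n|\,\nrm{S_{n+1}}$.

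Because $\lambda_n$ need not be smooth in $n$, I would instead follow \cite{KRV}. First prove a priori boundedness: localize on $\{M_n\le m\}$, replacing $\lambda_n$ by $\lambda_n\Ind_{\{M_n\le m\}}$, which is bounded and predictable. Then use a Gronwall-type argument with $\sum\beta_n=\infty$ and $\beta_n\to0$ to show that if $\sup_k\nrm{\sum_{j=n}^{k}\beta_j\lambda_j\CW_{j+1}}\to0$ as $n\to\infty$, then $\CZ_n\to0$.

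So the key probabilistic statement is this: \emph{for every bounded predictable real sequence $\xi_n$ (bounded by a constant $c$), the series $\sum_n\beta_n\xi_n\CW_{n+1}$ converges a.s. in $\Bb$.} Boundedness of $M_n$ then follows by the standard contradiction argument that the linear-growth bound \eqref{U5} permits. Once the key statement holds, $\CZ_n\to0$ by the deterministic lemma of \cite{KRV}, which carries over verbatim since it only uses the norm and the triangle inequality.

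\textbf{Step 2 (series convergence without geometry).} The idea is to approximate $\CW_n$ by finite-dimensional variables. In case (J3), tightness plus the moment bound \eqref{L3} give uniform integrability of $\nrm{\CW_n}$. Hence for each $\epsilon$ there is a finite-dimensional subspace $F$ and maps such that $\CW_n=\CV_n+\CW_n'$, where $\CV_n=\E[\pi(\CW_n)]$-centred lies in $F$ and $\sup_n\E\nrm{\CW_n'}^\alpha\le\epsilon$. To build $F$, cover $\Kb_\epsilon$ by finitely many balls and use a measurable nearest-center map $\pi$. Recentre both pieces so that they have mean zero; independence is preserved.

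On $F\cong\R^d$ the martingale $\sum\beta_n\xi_n\CV_{n+1}$ converges a.s. by the finite-dimensional result, namely the von Bahr--Esseen $L^\alpha$ bound with \eqref{L4}. For the remainder I would not use a martingale estimate. The crude bound
\[
\E\sup_k\nrm{\sum_{j=n}^k\beta_j\xi_j\CW'_{j+1}}\le c\sum_j\beta_j\E\nrm{\CW'_{j+1}}
\]
diverges, so it is unusable. Instead I would exploit that in $\ell^\infty$-type spaces only the type-1 inequality is available, and this is the main obstacle.

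My first attempt will be Hoffmann-Jørgensen's route. Write $\Bb$-valued sums as limits of finite-dimensional ones. Use that the mean-zero $\CW'$ are approximable in $L^\alpha(\Bb)$ by finite-range simple mean-zero variables, which is exactly what tightness or identical distribution provides. Then pass errors through $\sup_n\E\nrm{\cdot}$ using the Lévy--Ottaviani maximal inequality for independent summands, with the predictable $\xi$ handled by decoupling. In the i.i.d. cases, tightness is automatic since a single Radon law is tight. Case (J1) reduces to (J3) with $\alpha=2$. For (J2), add a truncation at level $n^{1/\alpha}$: the Marcinkiewicz--Zygmund argument plus \eqref{L2b} controls the truncated finite-dimensional part, while $\sum\P(\nrm{\CW_n}>n^{1/\alpha})<\infty$ disposes of the large values by Borel--Cantelli.
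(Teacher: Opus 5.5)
There is a genuine gap, and it sits exactly where you placed ``the main obstacle''. The statement you reduce everything to is a.s.\ norm convergence of $\sum_n\beta_n\xi_n\CW_{n+1}$ for bounded predictable $\xi_n$; equivalently, your Cauchy-tail condition, or the existence of the tails $S_n$. This statement is \emph{false} in general Banach spaces, even for bounded symmetric i.i.d.\ noise with $\xi_n\equiv1$.

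Take $\Bb=c_0$, $c_k=(\log(k+2))^{-1/4}$, $\CW_n=\sum_kc_k\epsilon_{n,k}e_k$ with $\{\epsilon_{n,k}\}$ i.i.d.\ Rademacher, and $\beta_n=\bigl((n+3)^{1/2}\log(n+3)\bigr)^{-1}$. Then $\nrm{\CW_n}\le1$ and (J1) holds. The $k$-th coordinates of the partial sums converge to $c_kS_k$, where $S_k=\sum_n\beta_n\epsilon_{n+1,k}$ are i.i.d.\ in $k$. Forcing the first $N$ signs to be $+1$ gives $\P(S_k\ge u)\ge\exp(-Cu^2\log^2u)$ for large $u$. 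Hence $\P(c_kS_k\ge1)\ge1/k$ for large $k$, and the second Borel--Cantelli lemma gives $c_kS_k\ge1$ infinitely often a.s. So the coordinatewise limit is not in $c_0$, and the series diverges a.s.

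Yet $\CX_n\to\xx^*$ does hold in this example. For $\Gx(\xx)=\xx$ the iterates are, up to a vanishing term, averages of the $\CW_{k+1}$ with weights summing to at most $1$, so coordinates beyond $K$ are at most $\sup_{k>K}c_k$. No maximal inequality or decoupling argument can establish a false statement, so your Step~2 cannot be completed. Likewise, the SLLN in arbitrary Banach spaces is not proved via convergence of $\sum_n\CW_n/n$; it is proved by applying the triangle inequality to averages of approximation errors.

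The missing idea is that the non-finite-dimensional part of the noise need not give a convergent series at all. Its contribution to the recursion only has to be asymptotically at most $\epsilon$ relative to the scale $\nu_n$. In the recursion the noise enters only through $\sum_{k=m}^n\bigl[\prod_{j=k+1}^n(1-\gamma_j)\bigr]\gamma_k\nu_k\CV_{k+1}$, whose weights sum to at most $1$ by \eqref{w7}. So the triangle-inequality bound you discarded is the right tool once you apply it to this average rather than to the raw series.

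It reduces everything to the \emph{scalar} statement that $\sum_k\beta_k(\nrm{\CV_{k+1}}-\delta_k)$ converges a.s.\ with $0\le\delta_k\le\epsilon$. This follows from real $L^2$-martingales, with truncation and Borel--Cantelli under (J2) and (J3). Lemma~\ref{deterministic-2} then bounds the contribution asymptotically by about $\epsilon\nu_n$.

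This is what the class $\mathscr{L}(\{\gamma_\centerdot\})$ encodes. For each $\epsilon$ there is a part whose weighted series converges in $\Bb$: the finite-dimensional part $\sum_j\xa_j\xi_{n,j}$ from Theorem~\ref{Aux1}, handled coordinatewise by martingales. The other part, $\CY_n$ with $\E\nrm{\CY_n}\le\epsilon$, is controlled only through its norm.

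Your finite-dimensional approximation is the right probabilistic input, but it must feed a norm-based deterministic lemma. That lemma is not the one from \cite{KRV} verbatim. Lemma~\ref{deterministic-T} obtains $\nu_n^{-1}\nrm{\xs_{n+1}}\to0$ only because $\limsup\le\epsilon$ holds for every $\epsilon$. Theorem~\ref{main-discrete} then builds the a priori bound \eqref{w23}, with the slack $(1+2\epsilon)\rho<1$, on top of that.
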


\begin{remark} Let us note that \eqref{L2b} and \eqref{L4} imply \eqref{L1b}. Thus in any of the 3 cases, \eqref{L1b} holds. \end{remark}
\begin{remark}
When the Banach space $\Bb$ is $\R^d$ - the finite dimensional Euclidean space, the results follow from \cite{KRV}, Theorem 4 and Theorem 8, since for $\R^d$-valued random variables \eqref{L3} implies \eqref{m2}. Thus, here we extend the result $($for independent mean 0 noise sequence$)$ and we do not impose any conditions on the Banach space $\Bb$ but instead only require that the noise sequence is \emph{tight}. 
\end{remark}
\begin{remark}
It was noted by J. Hoffmann-J\o rgensen \cite{Hoffmann} that while SLLN holds on all Banach Spaces for i.i.d. sequences, once we move away from identical distribution, it may not hold even if we have bounded $L^2$ norms. While the standard approach has been to impose appropriate geometric conditions on the underlying Banach Space, here we show that the probabilistic condition of tightness is enough to get that SLLN holds.
\end{remark} 

Before we proceed to prove this result, we need some results on the convergence of Banach-valued sequences, which may be of independent interest.

\section{Deterministic sequences} 
Throughout this section, let $\{\gamma_n:n\ge 0\}$ be a fixed $[0,\infty)$-valued sequence of constants such that 
\begin{equation}\label{RMa}
\textstyle\lim_{n\rightarrow\infty}\gamma_n=0,\;\;\;\sum_{n=0}^\infty\gamma_n=\infty.
\end{equation} 
Let $m^*<\infty$ be such that $\gamma_n<1$ for all $n\ge m^*$. Such an integer exists since $\gamma_n$ converges to 0, in view of \eqref{RMa}. In the rest of the section, we fix such an $m^*$.
Let us record a simple fact: for all $m,n$ such that $m^*\le m\le n$, we have
\begin{equation}\label{w7}
\textstyle\bigl[\prod_{k=m}^{n}(1-\gamma_k)\bigr]+\sum_{k=m}^{n}\bigl[\prod_{_{\{j:k<j\le n\}}}(1-\gamma_j)\bigr]\gamma_k=1. 
\end{equation}
This can be verified by induction on $n$. In this article, we will follow the usual convention that a product over an empty set is taken as 1. 

Let $\mathscr{L}(\{\gamma_\centerdot\})$ denote the set of all $\Bb$-valued sequences $\{\xz_n:n\ge 1\}$ such that for every $\epsilon>0$, $\exists$ a $\Bb$-valued sequence $\{ \xv_n:n\ge 1\}$, and a $[0,\infty)$-valued sequence $\{\delta_n:n\ge 1\}$ such that
\begin{align}
\label{k1}& \textstyle\sum_{k=0}^n\gamma_k (\xz_{k+1}-\xv_{k+1}), \;\;n\ge 0,\;\;\text{is a $\Bb$-valued convergent sequence}\\
\label{k2} &\textstyle\sum_{k=0}^n\gamma_k(\nrm{ \xv_{k+1}}-\delta_{k+1}), \;\; n\ge 0,\;\;\text{is a $[0,\infty)$-valued convergent sequence}\\
\label{k3}&\;\;0\le \delta_n\le \epsilon.
\end{align}
Clearly, $\mathscr{L}(\{\gamma_\centerdot\})$ is a linear space. 
We start with some auxiliary observations.

\bl\label{deterministic-1}
Let $\{\xa_k:m\le k\le n\}\subseteq \Bb$ be such that 
\be\label{w2}
\nrm{\textstyle\sum_{k=i}^{j}\xa_{k}}\le C\;\;\;\;\;\;\;\;\;\forall m\le i\le j\le n\ee
for some constant $C<\infty$ and let $\{\theta_k:m\le k\le n\}\subset (0,\infty)$ be an increasing sequence.
Then
\be\label{w3}
\nrm{\textstyle\sum_{k=i}^{n}\theta_k\xa_{k}}\le \theta_nC\;\;\;\;\;\;\;\;\;\forall m\le i\le n\ee
\el
\begin{proof} To prove \eqref{w3}, let us fix $i$, $m\le i\le n$. Let $\pi_i=\theta_i$ and for $i<k\le n$, let $\pi_k=\theta_k-\theta_{k-1}$. Then $\pi_k\ge 0$ and $\theta_k=\sum_{j=i}^{k}\pi_j$. Thus
\[\begin{split}\textstyle\sum_{k=i}^{n}\theta_k\xa_{k}&=\textstyle\sum_{k=i}^{n}\sum_{j=i}^{k}\pi_j\xa_{k}\\
&=\textstyle\sum_{j=i}^{n}\sum_{k=j}^{n}\pi_j\xa_{k}\\
&=\textstyle\sum_{j=i}^{n}\pi_j\sum_{k=j}^{n}\xa_{k}
\end{split}\]
and hence using $\pi_j\ge 0$,
\[\begin{split}\textstyle\nrm{\sum_{k=i}^{n}\theta_k\xa_{k}}&\le\textstyle\sum_{j=i}^{n}\nrm{\pi_j\sum_{k=j}^{n}\xa_{k}}\\
&\le\textstyle\sum_{j=i}^{n}\pi_j\nrm{\sum_{k=j}^{n}\xa_{k}}\\
&\le \textstyle\sum_{j=i}^{n}\pi_jC\\
&\le \theta_nC.
\end{split}\]
\end{proof}

\begin{lemma}\label{deterministic-2}
Let $\{\xz_n:n\ge 1\} \subseteq \Bb$, $s_0\in\Bb$, $\{\nu_n:n\ge 1\} \subseteq [1,\infty)$ be an increasing sequence and let $\{\xs_n:n\ge 1\} \subseteq \Bb$ be defined by, for $n\ge 0$ 
\be\label{B1}
\xs_{n+1}=(1- \gamma_n)\xs_n+\gamma_n\nu_n\xz_{n+1}.\ee
Then we have
\be\label{B1x}\textstyle
\limsup_n\frac{1}{\nu_n}\nrm{\xs_{n+1}}\le \limsup_n\nrm{\xz_{n+1}}.\ee
Further, if $n\mapsto\sum_{k=0}^n\gamma_k\xz_{k+1}$ is a convergent sequence $($in $\nrm{\cdot})$, then 
\be\textstyle
\label{B2} \lim_n\frac{1}{\nu_n}\nrm{\xs_{n+1}}=0.\ee
\end{lemma}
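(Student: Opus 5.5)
Unrolling the recursion \eqref{B1} from a fixed time $m\ge m^*$ onward gives, for $n\ge m$,
\[
\xs_{n+1}=\Bigl[\textstyle\prod_{k=m}^{n}(1-\gamma_k)\Bigr]\xs_m+\textstyle\sum_{k=m}^{n}\Bigl[\prod_{\{j:k<j\le n\}}(1-\gamma_j)\Bigr]\gamma_k\nu_k\xz_{k+1}.
\]
Write $P_{m,n}=\prod_{k=m}^{n}(1-\gamma_k)$. Since $\sum\gamma_k=\infty$ and $0\le\gamma_k<1$ for $k\ge m^*$, we have $P_{m,n}\to 0$ as $n\to\infty$. Because $\nu_n\ge1$, the first term divided by $\nu_n$ also tends to $0$.

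\textbf{Proof of \eqref{B1x}.} Fix $m$ and put $M_m=\sup_{k\ge m}\nrm{\xz_{k+1}}$; if this is infinite there is nothing to prove. Since $\nu$ is increasing, $\nu_k/\nu_n\le1$ for $m\le k\le n$. Hence the norm of the second term divided by $\nu_n$ is at most
\[
M_m\textstyle\sum_{k=m}^n\bigl[\prod_{k<j\le n}(1-\gamma_j)\bigr]\gamma_k\le M_m,
\]
where the last step uses \eqref{w7}. Letting $n\to\infty$ and then $m\to\infty$ gives \eqref{B1x}.

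\textbf{Proof of \eqref{B2}.} We apply Lemma \ref{deterministic-1} with
\[
\xa_k=\gamma_k\xz_{k+1},\qquad \theta_k=\nu_k\Big/\textstyle\prod_{j=k+1}^{n}(1-\gamma_j)\qquad (n \text{ fixed}).
\]
\begin{itemize}
\item $\theta_k$ is increasing in $k$: $\nu_k$ increases and $\prod_{j=k+1}^{n}(1-\gamma_j)$ increases in $k$ with factors in $(0,1]$. If some $\gamma_j=0$ the factor is $1$, so positivity of $\theta_k$ holds for $k\ge m^*$.
\item The second term equals $\prod_{j=m+1}^{n}(1-\gamma_j)\cdot\sum_{k=m}^n\theta_k\xa_k$ up to rearrangement, but it is cleaner to state the estimate directly. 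Given $\epsilon>0$, the Cauchy property of $\sum_k\gamma_k\xz_{k+1}$ lets us choose $m\ge m^*$ so that $\nrm{\sum_{k=i}^{j}\xa_k}\le\epsilon$ for all $m\le i\le j$.
\item Lemma \ref{deterministic-1} with $i=m$ then gives $\nrm{\sum_{k=m}^n\theta_k\xa_k}\le\theta_n\epsilon=\nu_n\epsilon$.
\item Multiplying back by the product, the second term equals
\[
\textstyle\sum_{k=m}^n\bigl[\prod_{k<j\le n}(1-\gamma_j)\bigr]\nu_k\xa_k=\sum_{k=m}^n\theta_k\xa_k,
\]
because $\theta_k$ was defined with the product in the denominator for fixed $n$. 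Its norm is therefore at most $\nu_n\epsilon$.
\end{itemize}
Dividing by $\nu_n$ and combining with the vanishing first term gives $\limsup_n\nu_n^{-1}\nrm{\xs_{n+1}}\le\epsilon$. Since $\epsilon$ is arbitrary, \eqref{B2} follows.

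\textbf{Main obstacle.} The delicate point is arranging $\theta_k$ so that the weight $\prod_{k<j\le n}(1-\gamma_j)\,\nu_k$ becomes exactly an increasing sequence whose last value is $\nu_n$. Once this is in place, the whole argument reduces to the Abel summation of Lemma \ref{deterministic-1}. We must also check that $\theta_k>0$, which holds for $k\ge m^*$ because all factors $1-\gamma_j$ lie in $(0,1]$ there. This is why $m$ is taken to be at least $m^*$.
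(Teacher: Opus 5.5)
Your proof of \eqref{B1x} is correct and is the paper's argument. For \eqref{B2} you have the right idea, which is also the paper's: Abel summation through Lemma \ref{deterministic-1} with $\xa_k=\gamma_k\xz_{k+1}$. However, your weight is inverted, and as written two steps are false.

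\emph{Monotonicity fails.} With $\theta_k=\nu_k/\prod_{j=k+1}^n(1-\gamma_j)$ you get $\theta_{k+1}/\theta_k=(\nu_{k+1}/\nu_k)(1-\gamma_{k+1})$. This is $<1$ whenever, for instance, $\nu_{k+1}=\nu_k$ and $\gamma_{k+1}>0$. So $\theta$ need not be increasing, and Lemma \ref{deterministic-1} does not apply.

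\emph{The identity fails.} $\sum_{k=m}^n\theta_k\xa_k$ is not the second term of the unrolled recursion. The equality you justify ``because $\theta_k$ was defined with the product in the denominator'' breaks down as soon as that product differs from $1$. Your factorisation ``$\prod_{j=m+1}^n(1-\gamma_j)\sum_k\theta_k\xa_k$'' does not help either: it produces the weights $\nu_k\prod_{j=m+1}^{k}(1-\gamma_j)$, not $\nu_k\prod_{j=k+1}^{n}(1-\gamma_j)$. Your own monotonicity reasoning (``$\nu_k$ increases and the product increases in $k$'') actually justifies the product of the two, not the quotient.

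\emph{The fix.} Take $\theta_k=\nu_k\prod_{j=k+1}^n(1-\gamma_j)$ for $m\le k\le n$, which is exactly the paper's choice. For $m\ge m^*$ this has the properties you need:
\begin{itemize}
\item it is a product of two positive nondecreasing sequences, since each factor $1-\gamma_j$ lies in $(0,1]$ and $\nu_k\ge 1$;
\item $\theta_n=\nu_n$;
\item $\sum_{k=m}^n\theta_k\xa_k$ is literally the second term.
\end{itemize}
Lemma \ref{deterministic-1} with $i=m$ then gives the bound $\nu_n\epsilon$. The rest of your argument, including the vanishing first term, goes through unchanged.
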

\begin{proof} 
Let us note that for $m\le n$
 \be\textstyle
\label{B3} \xs_{n+1}=\bigl[\prod_{k=m}^n(1- \gamma_k)\bigr]\xs_m+\sum_{k=m}^n\bigl[\prod_{j=k+1}^n(1- \gamma_j)\bigr]\gamma_k \nu_k\xz_{k+1}.\ee
For any $\eta> \limsup_n\nrm{\xz_{n+1}}$, and for $m\ge m^*$ such that for $j,k\ge m$, one has $\gamma_j\le 1$ and $\nrm{\xz_{k+1}}\le \eta$, we have 
\[
\begin{split}
\nrm{\xs_{n+1}}\le&\textstyle\bigl[\prod_{k=m}^n(1- \gamma_k)\bigr]\nrm{\xs_m}+ \sum_{k=m}^n\bigl[\prod_{j=k+1}^n(1- \gamma_j)\bigr] \gamma_k \nu_k\nrm{\xz_{k+1}}\\
&\le \textstyle\bigl[\prod_{k=m}^n(1- \gamma_k)\bigr]\nrm{\xs_m}+ \nu_n\eta\sum_{k=m}^n\bigl[\prod_{j=k+1}^n(1- \gamma_j)\bigr] \gamma_k .
\end{split}\]
It follows from \eqref{w7} that $\sum_{k=m}^n\bigl[\prod_{j=k+1}^n(1- \gamma_j) \gamma_j\bigr]\le 1$ and hence 
\[
\textstyle\frac{1}{\nu_n}\nrm{\xs_{n+1}} \textstyle\le \frac{1}{\nu_m}\nrm{\xs_m} \bigl[\prod_{k=m}^n(1- \gamma_k)\bigr]+\eta .
\]
In view of \eqref{RMa}, for $m\ge m^*$ fixed, $\bigl[\prod_{k=m}^n(1- \gamma_k)\bigr]$ converges to 0 as $n$ increases to $\infty$ and hence we have
\be\label{B5} \textstyle
\limsup_n\frac{1}{\nu_n}\nrm{\xs_{n+1}}\le \eta.\ee
This proves \eqref{B1x}. 

For the second part, using $n\mapsto\sum_{j=0}^n\gamma_j\xz_{j+1}$ is a convergent sequence, given $\eta>0$, we choose $m\ge m^*$ such that for $j,k\ge m$ we have $\nrm{\sum_{i=j}^k\gamma_i\xz_{i+1}}\le \eta$. Fix $n\ge m$. 
Note that $k\mapsto \bigl[\prod_{j=k+1}^n(1- \gamma_j)\bigr]$ is increasing and bounded by 1, thus it follows that $k\mapsto \bigl[\prod_{j=k+1}^n(1- \gamma_j)\bigr]\nu_k$ is increasing and bounded by $\nu_n$. Thus invoking Lemma \ref{deterministic-1} it follows that
\be\label{B7} \textstyle
\nrm{\sum_{k=m}^n\bigl[\prod_{j=k+1}^n(1- \gamma_j)\bigr]\gamma_k \nu_k\xz_{k+1}}\le \nu_n\eta.
\ee
Thus, for $n\ge m$
\[\begin{split}
\textstyle
\frac{1}{\nu_n}\nrm{\xs_{n+1}}&\le\textstyle\frac{1}{\nu_n}\bigl[\prod_{k=m}^n(1- \gamma_k)\bigr]\nrm{\xs_m}+\frac{1}{\nu_n}\left( \sum_{k=m}^n\bigl[\prod_{j=k+1}^n(1- \gamma_j)\bigr] \gamma_k \nu_k\nrm{\xz_{k+1}}\right)\\
&\le \textstyle\frac{1}{\nu_n}\bigl[\prod_{k=m}^n(1- \gamma_k)\bigr]\nrm{\xs_m}+\eta\end{split}\]
As seen above, $\frac{1}{\nu_n}\bigl[\prod_{k=m}^n(1- \gamma_k)\bigr]\nrm{\xs_m}$ converges to zero and hence we conclude
\[
\textstyle\limsup_{n\rightarrow\infty} \frac{1}{\nu_n}\nrm{\xs_{n+1}}\le \eta.\]
This holds for all $\eta>0$ and as a consequence, we conclude that \eqref{B2} holds.
\end{proof}

\bl\label{deterministic-T}
Let $\{\xz_n:n\ge 1\} \in\mathscr{L}(\{\gamma_\centerdot\})$, $\{\nu_n:n\ge 1\} \subseteq [1,\infty)$ be an increasing sequence, $\xs_0\in\Bb$ and let $\{\xs_n:n\ge 1\} \subseteq \Bb$ be defined by, for $n\ge 0$ 
\be\label{B1_v2}
\xs_{n+1}=(1- \gamma_n)\xs_n+\gamma_n\nu_n\xz_{n+1}.\ee
Then \be\textstyle
\label{B20} \lim_n\frac{1}{\nu_n}\nrm{\xs_{n+1}}=0.\ee
\el
\begin{proof} Let us fix $\epsilon>0$ and let $\{ \xv_n:n\ge 1\}$, $\{\delta_n:n\ge 1\}$ be such that \eqref{k1}--\eqref{k3} are satisfied. Let $\{ \xu_n:n\ge 1\}, \{ \xp_n:n\ge 1\}, \{ \xq_n:n\ge 1\}$ be defined by $ \xu_{n}=\xz_{n}- \xv_{n}$, $ \xp_0=\xs_0$ and $ \xq_0=0$ and for $n\ge 0$ let
\be\label{B30} \xp_{n+1}=(1- \gamma_n) \xp_n+\gamma_n\nu_n \xu_{n+1},\ee
\be\label{B31} \xq_{n+1}=(1- \gamma_n) \xq_n+\gamma_n\nu_n \xv_{n+1}.\ee
Let us note that $\xs_{n+1}= \xp_{n+1}+\xq_{n+1}$ and thus
\be\label{B32} \nrm{\xs_{n+1}}\le \nrm{ \xp_{n+1}}+\nrm{ \xq_{n+1}}.\ee
In view of assumption \eqref{k1} and Lemma \ref{deterministic-2}, it follows that 
\be\label{B33}\textstyle \lim_n\frac{1}{\nu_n}\nrm{ \xp_{n+1}}=0.\ee

For $n\ge m$, let $\xi_{n}= \nrm{ \xv_n}-\delta_{n}$ (and with $\zeta_m=0$, $\omega_m=0$)
\be\label{B35}\zeta_{n+1}=(1- \gamma_n) \zeta_n+\gamma_n\nu_n \xi_{n+1},\ee
\be\label{B36}\omega_{n+1}=(1- \gamma_n) \omega_n+\gamma_n\nu_n \delta_{n+1}.\ee
Let us note that for $n\ge m\ge m^*$
\[\begin{split}
\nrm{ \xq_{n+1}}&\le\textstyle\bigl[\prod_{k=m}^n(1- \gamma_k)\bigr]\nrm{ \xq_m}+\sum_{k=m}^n\bigl[\prod_{j=k+1}^n(1- \gamma_j)\bigr]\gamma_k \nu_k\nrm{ \xv_{k+1}}\\
&\le\textstyle\bigl[\prod_{k=m}^n(1- \gamma_k)\bigr]\nrm{ \xq_m}+\sum_{k=m}^n\bigl[\prod_{j=k+1}^n(1- \gamma_j)\bigr]\gamma_k \nu_k(\zeta_{k+1}+\delta_{k+1})\\
&\le \textstyle\bigl[\prod_{k=m}^n(1- \gamma_k)\bigr]\nrm{ \xq_m}+\zeta_{n+1}+\omega_{n+1}
\end{split}\]
Once again using Lemma \ref{deterministic-2} along with \eqref{k2} and \eqref{k3} , it follows that 
\[\textstyle \lim_n\frac{1}{\nu_n}\zeta_{n+1}=0,\]
\[\textstyle\limsup_n \frac{1}{\nu_n}\omega_{n+1}\le\epsilon\]
and using $\lim_n\frac{1}{\nu_n}\bigl[\prod_{k=m}^n(1- \gamma_k)\bigr]\nrm{ \xq_m}=0$, we conclude
\be\label{B34} \textstyle\limsup_n\frac{1}{\nu_n}\nrm{ \xq_{n+1}}\le\epsilon\ee
In view of \eqref{B32}, \eqref{B33}, \eqref{B34}, we conclude that 
\[\limsup_n \frac{1}{\nu_n}\nrm{\xs_{n+1}}\le\epsilon.\]
Since this holds for all $\epsilon>0$, the result \eqref{B20} follows.
\end{proof}

Here is our main observation on $\Bb$-valued (deterministic) sequences, that relates to stochastic approximation.
\bt \label{main-discrete}
Let $\{\beta_n:n\ge 0\}$ satisfy \eqref{RM} and $\Gx:\Bb\mapsto \Bb$ satisfy \eqref{U2}. 
Let $\{\xz_n:n\ge 1\} \in\mathscr{L}(\{\gamma_\centerdot\})$. Let $\xx_0\in\Bb$ be arbitrary and let $\{\xx_{n+1}:n\ge 0\}\subseteq \Bb$ be defined by 
\be\label{w6} \xx_{n+1}=\xx_n-\beta_n\Gx(\xx_n)+\beta_n\phi_n(\xx_0,\xx_1,\ldots ,\xx_n) \xz_{n+1},\ee
where 
\be\label{w9}  \phi_n(\xx_0,\xx_1,\ldots ,\xx_n)=\bigl(1+\max_{\{0\le k\le n\}}\nrm{\xx_k}\bigr).\ee
Then 
\[ \lim_{n\rightarrow \infty} \nrm{\xx_n-\xx^*}=0.\]
\et
\begin{proof} Let $\Gx:\Bb\mapsto \Bb$, $\tau >0,\;\rho \in (0,1)$, be such that \eqref{U2} holds. Let $\nu_n=\tau\phi_n(\xx_0,\xx_1,\ldots ,\xx_n)$, $\gamma_n=\tau^{-1}\beta_n$, and $\xr_n=\xx_n-\xx^*-\tau \Gx(\xx_n)$. With these notations, the condition \eqref{U2} yields
\be\label{U7}\nrm{\xr_n}\le\rho\nrm{\xx_n-\xx^*}.\ee
The equation \eqref{w6} can be rewritten as 
\be\label{w12}
\begin{split} 
\xx_{n+1}-\xx^*&=\xx_n-\xx^*-\gamma_n\tau \Gx(\xx_n)+\gamma_n\tau\phi_n(\xx_0,\xx_1,\ldots ,\xx_n) \xz_{n+1}\\
&=(1-\gamma_n)(\xx_n-\xx^*)+\gamma_n\xr_n+\gamma_n\nu_n \xz_{n+1}.\end{split}
\ee
Note that, we have fixed $m^*$ such that $\gamma_n=\tau^{-1}\beta_n<1$ for all $n\ge m^*$.
For $m^*\le m\le n$, let
 \begin{align}\label{w13a}
  \xa_{m,n+1}&= \textstyle\bigl[\prod_{j=m}^{n}(1-\gamma_j)\bigr](\xx_m-\xx^*)\\
  \label{w13b}
\xb_{m,n+1}&= \textstyle\sum_{k=m}^{n} \bigl[\prod_{_{\{j:k<j\le n\}}}(1-\gamma_j)\bigr]\gamma_k\xr_k\\
\label{w13c}
 \xs_{m,n+1}&= \textstyle{\sum_{k=m}^n}\bigl[\prod_{_{\{j:k<j\le n\}}}(1-\gamma_j)\bigr]\gamma_k\nu_k \xz_{k+1}
  \end{align}
By induction on $n$, one can check that
\be\label{w14} \xx_{n+1}-\xx^*=\xa_{m,n+1}+\xb_{m,n+1}+\xs_{m,n+1}\ee
Let us choose $0<\epsilon<\frac{1}{2}$ such that 
\be\label{w15} (1+2\epsilon)\rho<1.\ee
This can be done as $\rho<1$. Noting that
\[\xs_{0,n+1}=(1- \gamma_n)\xs_{0,n}+\gamma_n\nu_n\xz_{n+1}\]
Invoking Theorem \ref{deterministic-T}, we have
\be\label{w16} \lim_n\textstyle\frac{1}{\nu_n}\nrm{\xs_{0,n}}=0.\ee
Using this, let us get $m\ge m^*$ such that $n\ge m$ implies
\[\textstyle \frac{1}{\nu_n}\nrm{\xs_{0,n}}\le \textstyle\frac{1}{2\tau}\epsilon.\]
It can be seen (by induction) that for $n \ge m$
\[ \xs_{0,n+1}=\textstyle\bigl[\prod_{j=m}^{n}(1-\gamma_j)\bigr]\xs_{0,m}+\xs_{m,n+1}\]
and hence for $n\ge m$
\[ \begin{split}
 \nrm{\xs_{m,n+1}}&\le \nrm{\xs_{0,n+1}}+\textstyle\bigl[\prod_{j=m}^{n}(1-\gamma_j)\bigr]\nrm{\xs_{0,m}}\\
&\le \nrm{\xs_{0,n+1}}+\nrm{\xs_{0,m}}.\end{split}\]
and thus
 \begin{equation}\label{w21}
\textstyle \frac{1}{\nu_n} \nrm{\xs_{m,n+1}}\le \textstyle\frac{1}{\tau}\epsilon,\;\;\forall n\ge m .\ee
 
For $n\ge 0$, let $ \pi_n=\max\{\nrm{\xx_i-\xx^*}: 0\le i\le n\}$. Let us note that 
 \begin{equation}\label{w22}
 \nu_k=\tau(1+\max\{\nrm{\xx_i}:0\le i\le k\})\le \tau(1+\nrm{\xx^*}+\pi_k)\ee
We will now prove that $\forall k\ge 1$
 \begin{equation}\label{w23}
 \pi_k\le (1+2\epsilon)(1+\nrm{\xx^*}+\pi_m).\ee
Of course \eqref{w23} holds for all $k\le m$ since $k\mapsto\pi_k$ is increasing. We will prove by induction that if \eqref{w23} holds for $k=n$, it is also true for $k=n+1$.
Using \eqref{w13a}, \eqref{w13b} and \eqref{w14} we get
 \begin{equation}\label{w27}
\nrm{\xx_{n+1}-\xx^*}\le \nrm{\xa_{m,n+1}}+\nrm{\xb_{m,n+1}}+\nrm{\xs_{m,n+1}}\ee
Clearly
 \begin{equation}\label{w28}
\nrm{\xa_{m,n+1}}\le \textstyle\bigl[\prod_{j=m}^{n}(1-\gamma_j)\bigr]\pi_m.\ee
Using \eqref{U7} we get
 \begin{equation}\label{w28x}\begin{split}
\nrm{\xb_{m,n+1}}&\le\textstyle\sum_{k=m}^{n} \bigl[\prod_{_{\{j:k<j\le n\}}}(1-\gamma_j)\bigr]\gamma_k\nrm{\xr_k}\\
&\le \textstyle\sum_{k=m}^{n} \bigl[\prod_{_{\{j:k<j\le n\}}}(1-\gamma_j)\bigr]\gamma_k\rho\pi_k
\end{split}\ee
Using the induction hypothesis (that \eqref{w23} is valid for $k\le n$), \eqref{w28x} and \eqref{w15} we get
 \begin{equation}\label{w29}\begin{split}
\nrm{\xb_{m,n+1}}&\le \textstyle\sum_{k=m}^{n} \bigl[\prod_{_{\{j:k<j\le n\}}}(1-\gamma_j)\bigr]\gamma_k\rho(1+2\epsilon)(1+\nrm{\xx^*}+\pi_m)\\
&\le \textstyle\sum_{k=m}^{n} \bigl[\prod_{_{\{j:k<j\le n\}}}(1-\gamma_j)\bigr]\gamma_k(1+\nrm{\xx^*}+\pi_m)\end{split}\ee
Invoking \eqref{w28}, \eqref{w29} along with \eqref{w7} we deduce
\be\label{w32}
\nrm{\xa_{m,n+1}}+\nrm{\xb_{m,n+1}}\le(1+\nrm{\xx^*}+\pi_m)\ee
In view of \eqref{w21}, \eqref{w22} and the induction hypothesis \eqref{w23} we get
\be\label{w33_v2} \begin{split}\nrm{\xs_{m,n+1}}&\le \textstyle\frac{1}{\tau}\epsilon\,\nu_n\\
&\le\textstyle\frac{1}{\tau}\epsilon\,\tau(1+\nrm{\xx^*}+\pi_n)\\
&\le \textstyle\epsilon\,(1+\nrm{\xx^*}+(1+2\epsilon\,)\pi_m)\\
&\le \textstyle2\epsilon\,(1+\nrm{\xx^*}+\pi_m)\\
\end{split}\ee
since $2\epsilon^2\le \epsilon$ (in view of the fact that $0<\epsilon<\frac{1}{2}$). The inequalities \eqref{w32} and \eqref{w14} show that if \eqref{w23} is true for $k\le n$, then it is true for $k=n+1$. Hence \eqref{w23} is true for all $k\ge 1$.
Thus we conclude that $\sup_n\pi_n<\infty$ and as a consequence that $\sup_{n\ge 1} \nu_n<\infty.$
Thus we can now deduce from Theorem \ref{deterministic-T} that for each $m\ge 1$,
 \be\label{w36}
\lim_{n\rightarrow \infty}\textstyle\nrm{\xs_{m,n}}=0 .\ee
Using the definition \eqref{w13a} and the assumption that $\sum_n\gamma_n=\infty$, it follows that $\forall m\ge 1$,
 \be\label{w38}
\textstyle\lim_n\nrm{\xa_{m,n}}=0.\ee
Let $ \chi_n=\sup\{\nrm{\xx_i-\xx^*}: i\ge n\}$ and $\chi^*=\lim_n\chi_n$. Note that $n\mapsto \chi_n$ is decreasing and $\chi_{_1}=\lim_m\pi_m<\infty$ as seen above, hence $\lim_n\chi_n$ exists and is finite. 

Using \eqref{w28x}, we see that for each $m$, 
 \begin{equation}\label{w41}\begin{split}
\nrm{\xb_{m,n+1}}&\le\textstyle\sum_{k=m}^{n} \bigl[\prod_{_{\{j:k<j\le n\}}}(1-\gamma_j)\bigr]\gamma_k\rho\pi_k\\
&\le\rho\chi_m\textstyle\sum_{k=m}^{n} \bigl[\prod_{_{\{j:k<j\le n\}}}(1-\gamma_j)\bigr]\gamma_k\\
&\le \rho\chi_m\end{split}\ee
Using \eqref{w27} along with \eqref{w36}, \eqref{w38} and \eqref{w41}, we conclude that for each $m\ge m^*$,
\[\begin{split}
\textstyle \limsup_n\nrm{\xx_{n+1}-\xx^*}&\le \textstyle\limsup_n\nrm{\xa_{m,n+1}}+\limsup_n\nrm{\xb_{m,n+1}}+\limsup_n\nrm{\xs_{m,n+1}}\\
 &\le 0+\rho\chi_m+0.\end{split}\]
Thus we have
 \[\chi^*\le\rho\chi_m\;\;\;\forall m.\]
Since $\chi_m\downarrow\chi^*$ we conclude $\chi^*\le\rho\chi^*$. Since $\rho<1$ and $0\le\chi^*<\infty$ as noted above, this is possible only if $\chi^*=0$.
\end{proof}

\section{Proof of the Main Theorem}

We begin with a result on tight and uniformly integrable Banach-valued sequences of random variables, which plays a key role in our approach. This could be of independent interest.
\begin{theorem}\label{Aux1}
Let $\{\CW_n:n\ge 1\}$ be a sequence of independent $\Bb$-valued random variables with mean zero and is \emph{tight} $($\emph{i.e.} satisfies \eqref{m1}, \eqref{m2}$)$.
Suppose $\{\CW_n:n\ge 1\}$ is uniformly integrable, \emph{i.e.}
\begin{equation}\label{m3}
\textstyle\lim_{t\rightarrow\infty}\Bigl[ \sup_{n\ge 1}\E\bigl[\nrm{\CW_{n}}\Ind_{\{\nrm{\CW_{n}}\ge t\}}\bigr]\Bigr]=0.
 \end{equation}
Then for all $\epsilon>0$ there exists $t\ge 1$, $\{\xa_j: 1\le j\le t\}\subseteq\Bb$, a sequence $\{ \CY_{n}:n\ge 1\}$ of $\Bb$-valued independent random variables, a constant $C<\infty$ and for $1\le j\le t$, a sequence $\{\xi_{n,j}: n\ge 1\}$ of $[-1,1]$-valued independent random variables such that for all $n\ge 1$
\begin{align}\label{R1}
\CW_n=\CY_n+&\textstyle\sum_{j=1}^t \xa_j\xi_{n,j},\\
\label{R2}
\E[\xi_{n,j}]&=0,\;\;\;\;\forall\,1\le j\le t,\\
\label{R3}\E[\nrm{\CY_n}]&\le \epsilon\\
\label{R4}
\nrm{\CY_n-\CW_n}&\le C.
\end{align}
Further, if $\{\CW_n:n\ge 1\}$ are i.i.d. then $\{\CY_n:n\ge 1\}$ are i.i.d. and for each $1\le j\le t$, $\{\xi_{n,j}: n\ge 1\}$ are also i.i.d.
\end{theorem}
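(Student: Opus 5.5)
The plan is to build the decomposition explicitly by truncating each $\CW_n$ to a fixed compact set and then discretizing that compact set into finitely many pieces. The deterministic vectors $\xa_j$ will be the ``centres'' of the pieces, and the $\xi_{n,j}$ will be the centred indicators of the pieces. Everything left over goes into $\CY_n$.

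\emph{Step 1: choice of the compact set.} Fix $\epsilon>0$ and put $\delta=\epsilon/4$.
\begin{enumerate}[(a)]
\item By the uniform integrability \eqref{m3}, choose $M\ge 1$ with $\sup_n\E\bigl[\nrm{\CW_n}\Ind_{\{\nrm{\CW_n}> M\}}\bigr]\le\delta/2$.
\item By tightness \eqref{m2}, choose a compact $\Kb_\eta$ with $\eta=\delta/(2M)$, and let $\Kb=\Kb_\eta\cap\{\xx:\nrm{\xx}\le M\}$. This set is compact.
\end{enumerate}
Then for every $n$,
\[\E\bigl[\nrm{\CW_n}\Ind_{\{\CW_n\notin\Kb\}}\bigr]\le \E\bigl[\nrm{\CW_n}\Ind_{\{\nrm{\CW_n}>M\}}\bigr]+M\,\P(\CW_n\notin\Kb_\eta)\le\delta .\]
The bound is uniform in $n$. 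This is exactly where tightness replaces any geometric hypothesis on $\Bb$.

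\emph{Step 2: discretization.} Cover $\Kb$ by finitely many open balls $B(\xa_j,\delta)$, $1\le j\le t$, with centres $\xa_j\in\Kb$, so that $\nrm{\xa_j}\le M$. Disjointify to Borel sets
\[A_1=\Kb\cap B(\xa_1,\delta),\qquad A_j=\bigl(\Kb\cap B(\xa_j,\delta)\bigr)\setminus\textstyle\bigcup_{i<j}A_i .\]
Let $p_{n,j}=\P(\CW_n\in A_j)$, and define
\[\xi_{n,j}=\Ind_{\{\CW_n\in A_j\}}-p_{n,j},\qquad \CY_n=\CW_n-\textstyle\sum_{j=1}^t\xa_j\xi_{n,j}.\]
This gives the following properties.
\begin{enumerate}[(i)]
\item \eqref{R1} holds by definition.
\item Each $\xi_{n,j}$ takes values in $[-1,1]$ and has mean $0$, which is \eqref{R2}.
\item $\nrm{\CY_n-\CW_n}\le\sum_j\nrm{\xa_j}\le tM=:C$, which is \eqref{R4}.
\item All these variables are Borel functions of $\CW_n$ alone, so independence across $n$ is inherited.
\item In the i.i.d.\ case $p_{n,j}=p_j$ does not depend on $n$, so $\{\CY_n\}$ and each $\{\xi_{n,j}\}$ are i.i.d.
\end{enumerate}

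\emph{Step 3: the estimate \eqref{R3}, which is the main point.} Write $f(\xx)=\sum_j\xa_j\Ind_{A_j}(\xx)$, so that $\nrm{\xx-f(\xx)}<\delta$ on $\Kb$ and $f=0$ off $\Kb$. Then
\[\CY_n=\bigl(\CW_n-f(\CW_n)\bigr)+\E[f(\CW_n)] .\]
The first term has expected norm at most $\delta+\E[\nrm{\CW_n}\Ind_{\{\CW_n\notin\Kb\}}]\le 2\delta$.

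For the second term I use $\E[\CW_n]=0$ to write $\E[f(\CW_n)]=\E[f(\CW_n)-\CW_n]$. Hence
\[\nrm{\E[f(\CW_n)]}\le\E\nrm{f(\CW_n)-\CW_n}\le 2\delta .\]
Altogether $\E\nrm{\CY_n}\le 4\delta=\epsilon$.

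The only delicate points are these two uses of the zero-mean hypothesis and the uniformity in $n$ of the tail estimate from Step 1. Measurability and Bochner integrability pose no problem: $\CW_n$ is integrable by uniform integrability, and $f(\CW_n)$ is a simple function.
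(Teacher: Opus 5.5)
Your proposal is correct and follows essentially the same route as the paper: use uniform integrability plus tightness to get a compact $\Kb$ with $\sup_n\E\bigl[\nrm{\CW_n}\Ind_{\{\CW_n\notin\Kb\}}\bigr]$ small, partition $\Kb$ into finitely many small-diameter Borel pieces $A_j$ with centres $\xa_j$, take $\xi_{n,j}$ to be the centred indicators of $\{\CW_n\in A_j\}$, and use $\E[\CW_n]=0$ to bound the leftover mean $\sum_j\xa_j\P(\CW_n\in A_j)$. The only cosmetic difference is that you intersect the compact set with the ball of radius $M$, so the constant $C=tM$ is explicit, whereas the paper simply takes $C=\sum_j\nrm{\xa_j}$.
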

\begin{proof} Given $\epsilon>0$, using uniform integrability of $\{\CW_n:n\ge 1\}$, let us get $\lambda>0$ such that 
\be\label{w7q}
\E[ \nrm{\CW_n}\Ind_{\{\nrm{\CW_n}>\lambda\}}]\le \textstyle\frac{1}{8}\epsilon\ee
and then using tightness of $\{\CW_n:n\ge 1\}$ get compact $\Kb\subseteq\Bb$ such that
\be\label{w8q}
\P(\CW_{n+1}\not\in \Kb)\le \textstyle\frac{1}{8\lambda}\epsilon\;\;\forall n\ge 0.\ee
Using \eqref{w7q} and \eqref{w8q} in the last step, it follows that
\be\label{w9q}\begin{split}
\E[ \nrm{\CW_{n+1}} \Ind_{\{\CW_{n+1}\not\in \Kb\}}]\le &\E[ \nrm{\CW_{n+1}} \Ind_{\{\CW_{n+1}\not\in \Kb\}}\Ind_{\{\nrm{\CW_{n+1}}\le\lambda\}}]+ \E[ \nrm{\CW_{n+1}} \Ind_{\{\CW_{n+1}\not\in \Kb\}}\Ind_{\{\nrm{\CW_{n+1}}>\lambda\}}]\\
\le&\lambda \E[ \Ind_{\{\CW_{n+1}\not\in \Kb\}}]+\E[ \nrm{\CW_{n+1}} \Ind_{\{\nrm{\CW_{n+1}}>\lambda\}}]\\
\le &\lambda\textstyle\frac{1}{8\lambda}\epsilon+\frac{1}{8}\epsilon=\frac{1}{4}\epsilon.\end{split}\ee
Let us note that in view of $\E[\CW_{n+1}]=0$, it follows that
\be\label{w10q}\begin{split}
\nrm{\E[\CW_{n+1}\Ind_{\{\CW_{n+1}\}\in \Kb\}} ]}&=\nrm{\E[-\CW_{n+1} \Ind_{\{\CW_{n+1}\not \in \Kb\}} ]}\\
&\le \E[\nrm{\CW_{n+1}}\Ind_{\{\CW_{n+1}\not \in \Kb\}}]\\
&\le\textstyle \frac{1}{4}\epsilon.\\
\end{split}\ee
Using compactness of $\Kb$, we can get $t$, $\xa_j\in \Kb$, $\CC_j\subset \Bb$ for $1\le j\le t$ such that $\{\CC_1,\CC_2,\ldots \CC_t\}$ is a (Borel measurable) partition of $\Kb$ and 
\be\label{w11q}
\nrm{\xy-\xa_j}\le\textstyle \frac{1}{4}\epsilon,\;\;\forall \xy\in \CC_j,\; 1\le j\le t.\ee Let $\eta_{n+1,j}=\Ind_{\{\CW_{n+1}\in \CC_j\}}$ and $\mu_{n+1,j}=\E[\eta_{n+1,j}]=\P\{\CW_{n+1}\in \CC_j\}$. 
In view of \eqref{w11q} we get
\be\label{w11qq}
\nrm{\CW_{n+1} \Ind_{\{\nrm{\CW_{n+1}}\in \Kb\}}-\textstyle\sum_{j=1}^t \xa_j\eta_{n+1,j}}\le\textstyle \frac{1}{4}\epsilon.\ee
Combining this with \eqref{w10q}, we conclude
\be\label{w12q}
\nrm{\textstyle\sum_{j=1}^t \xa_j\mu_{n+1,j}}\le \frac{1}{2}\epsilon.\ee
For $n\ge 0$ and $1\le j\le t$, let
 \[\xi_{n+1,j}=\eta_{n+1,j}-\mu_{n+1,j}\] 
 and 
 \[\CY_{n+1}= \CW_{n+1}-\textstyle\sum_{j=1}^t \xa_j\xi_{n+1,j}.\]
By definition, \eqref{R1} and \eqref{R2} are true. Let us note that
\be\label{w12qq}
\CY_{n+1}=\CW_{n+1} \Ind_{\{\nrm{\CW_{n+1}}\not\in \Kb\}}+(\CW_{n+1} \Ind_{\{\nrm{\CW_{n+1}}\in \Kb\}} -\textstyle\sum_{j=1}^t \xa_j\eta_{n+1,j})+\textstyle\sum_{j=1}^t \xa_j\mu_{n+1,j}\ee
Combining \eqref{w9q}, \eqref{w11qq}, \eqref{w12q} and \eqref{w12qq} we get 
\be\label{w13q}\begin{split}
\E[\nrm{\CY_{n+1}}]\le&\;\;\; \E[\nrm{\CW_{n+1} \Ind_{\{\nrm{\CW_{n+1}}\not\in \Kb\}}}] + \E[\nrm{\CW_{n+1} \Ind_{\{\nrm{\CW_{n+1}}\in \Kb\}}\\
&\;\;\;\;\;\;-\textstyle\sum_{j=1}^t \xa_j\eta_{n+1,j}}]+\nrm{\textstyle\sum_{j=1}^t \xa_j\mu_{n+1,j}}\\ \le
&\;\;\frac{\epsilon}{4}+\frac{\epsilon}{4}+\frac{\epsilon}{2}=\epsilon.
  \end{split}\ee
This proves \eqref{R3}. And \eqref{R4} follows with $C=\sum_{j=1}^t\nrm{\xa_j}.$

From the construction, it is clear that since $\{\CW_n:n\ge 1\}$ is a sequence of independent random variables, so are $\{\CY_n:n\ge 1\}$ as well as $\{\xi_{n,j}: n\ge 1\}$ for each $j$, $1\le j\le t$. And when $\{\CW_n:n\ge 1\}$ are i.i.d., $\{\CY_n:n\ge 1\}$ as well as $\{\xi_{n,j}: n\ge 1\}$ for each $j$, $1\le j\le t$ are also i.i.d. 
\end{proof}
\begin{remark}\label{integrability}
As a consequence of \eqref{R4} we note that for any $\alpha>0$
\be\label{U24}
\sup_{n\ge 1}\E[\nrm{\CW_n}^\alpha]<\infty\;\text{ if and only if }\; \sup_{n\ge 1}\E[\nrm{\CY_n}^\alpha]<\infty.\ee
\end{remark}

Coming to the proof of Theorem \ref{Sa-ind}, we need some more notation.
Let $\Gx$, $\{\beta_n:n\ge 0\}$, $\{\CW_n: n\ge 1\}$, and $\{\lambda_n:n\ge 0\}$ be as in the statement of Theorem \ref{Sa-ind} and $\{\phi_n:n\ge 0\}$ be given by \eqref{w9}. For $n\ge 0$, let
\be\label{w33_v3}
 \psi_n(\xx_0,\xx_1,\ldots ,\xx_n)=\lambda_n(\xx_0,\xx_1,\ldots ,\xx_n)\bigl(C_1\phi_n(\xx_0,\xx_1,\ldots ,\xx_n)\bigr)^{-1}\ee
where $C_1$ is the constant in condition \eqref{U5} on $\lambda_n$. 
Let $\clf_n=\sigma(\CX_0,\CX_1,\ldots ,\CX_n)$, $\CZ_0=\CX_0=\xx_0$ and $\{\CZ_{n+1}: n\ge 0\}$ be defined by, for $n\ge 0$,
\be\label{w33_v4}
\mathrm{\CZ}_{n+1}=\psi_n(\CX_0,\CX_1,\ldots ,\CX_n)\CW_{n+1}.
\ee
Note that $\{\psi_n(\CX_0,\CX_1,\ldots ,\CX_n):n\ge 0\}$ are $(\clf_n)$-adapted random variables, bounded by 1.
We can see that \eqref{U6} can be recast as 
\begin{equation}\label{V6}
\CX_{n+1}=\CX_n-\beta_n \bigl(\Gx(\CX_n)+\phi_n(\CX_0,\ldots,\CX_n)\CZ_{n+1}\bigr).\end{equation}
We will show that under each of the conditions $(J1)$, $(J2)$, or $(J3)$,
\begin{equation}\label{V7}\P(\{\CZ_n: n\ge 1\}\subseteq \mathscr{L}(\{\beta_\centerdot\}))=1.\ee
Invoking Theorem \ref{main-discrete} the required result \eqref{MM} would follow from this. 

\textbf{Step 1:}\\
This step is common when $(J1)$, $(J2)$ or $(J3)$ holds. It is easy to see that under each of these assumptions, it follows that $\{\CW_n: n\ge 1\}$ is tight and uniformly integrable. 

Let us fix $\epsilon>0$. So we are given that $\{\CW_n:n\ge 1\}$ is a \emph{tight} sequence of independent random variables with mean 0. Using Theorem \ref{Aux1}, we get $t\ge 1$, $\{\xa_j: 1\le j\le t\}\subseteq\Bb$, a sequence $\{ \CY_{n}:n\ge 1\}$ of $\Bb$-valued independent random variables, a constant $C<\infty$ and for $1\le j\le t$, a sequence $\{\xi_{n,j}: n\ge 1\}$ of $[-1,1]$-valued independent random variables such that \eqref{R1}, \eqref{R2}, \eqref{R3}, \eqref{R4} hold. Note that
\be\label{R7}
\begin{split}
\CZ_{n+1}&=\psi_n(\CX_0,\CX_1,\ldots ,\CX_n)\CW_{n+1}\\
&=\psi_n(\CX_0,\CX_1,\ldots ,\CX_n)\bigl(\CY_{n+1}+\textstyle\sum_{j=1}^t \xa_j\xi_{n+1,j}\bigr).\end{split}\ee
For $n\ge 0$, let
\be\label{R8}
\CV_{n+1}=\psi_n(\CX_0,\CX_1,\ldots ,\CX_n)\CY_{n+1}.\ee
Recall that $\{\psi_n(\CX_0,\CX_1,\dots ,\CX_n):n\ge 0\}$ are $(\clf_n)$-adapted random variables, bounded by 1. Hence for each $j$,
\[\textstyle\sum_{n=0}^m\beta_n\psi_n(\CX_0,\CX_1,\ldots ,\CX_n)\xi_{n+1,j}\]
is a real-valued $L^2$-bounded martingale (since $\{\beta_n\}$ satisfies \eqref{RM}) and hence converges almost surely. Thus the $\Bb$-valued sequence of random variables 
\be\label{R10}
\textstyle\sum_{k=0}^n\beta_k(\CZ_{k+1}-\CV_{k+1}) =\sum_{j=1}^t\xa_j\left(\sum_{k=0}^n\beta_k\psi_k(\CX_0,\CX_1,\ldots ,\CX_k)\xi_{k+1,j}\right)\;\;
\text{converges almost surely.}\ee

\textbf{Step 2:}\\
Let us assume that $(J1)$ is true. For $n\ge 1$, let $\delta_n=\E(\nrm{\CV_{n+1}}).$ Then
\be\label{S14}\delta_n\le\E(\nrm{\CV_{n+1}}) \le \E(\nrm{\CY_{n+1}})\le \epsilon.\ee
Let us note that (using the definition of $\delta_n$ in the first step, and \eqref{R8} in the second step) we have
\be\label{R17_v1}
\begin{split}
\textstyle\sum_{n=0}^\infty\beta_n^2\E[(\nrm{\CV_{n+1}}-\delta_n)^2]&\le \textstyle\sum_{n=0}^\infty\E[\beta_n^2\nrm{\CV_{n+1}}^2]\\
&\le \textstyle\sum_{n=0}^\infty\E[\beta_n^2\nrm{\CY_{n+1}}^2]\\
&\le \textstyle [\sup_{n\ge 0}\E(\nrm{\CY_{n+1}}^2)]\left[\sum_{n=0}^\infty\beta_n^2\right]\\
&<\infty.
\end{split}\ee
Since $\zeta_{n+1}=(\nrm{\CV_{n+1}}-\delta_n)$ is a sequence of independent random variables with mean 0, \eqref{R17_v1} implies that $\sum_{k=0}^n\beta_k\zeta_{k+1}$ is an $L^2$-bounded martingale and hence
\be\label{S18}
\textstyle\sum_{n=0}^\infty\beta_n(\nrm{\CV_{n+1}}-\delta_n) \;\;\text{converges almost surely.}\ee
Invoking \eqref{R10}, \eqref{S14} and \eqref{S18}, it follows that \eqref{V7} holds, completing the proof as noted earlier, when $(J1)$ holds. \\

\textbf{Step 3:}\\
Let us now assume that $(J2)$ is true and $\alpha\in[1,2)$ be such that \eqref{L2a} and \eqref{L2b} are satisfied. Note that in view of Theorem \ref{Aux1} and Remark \ref{integrability}, we have that $\{\CY_j:j\ge 1\}$ are i.i.d. and  
\be\label{S29}
\E(\nrm{\CY_1}^\alpha)<\infty.\ee
For $n\ge 1$, let $\delta_n=\E(\nrm{\CV_{n+1}}\Ind_{\{\nrm{\CY_{n+1}}^\alpha\le n\}})$ and 
\[\zeta_{n+1}=\nrm{\CV_{n+1}}\Ind_{\{\nrm{\CY_{n+1}}^\alpha\le n\}}-\delta_n.\]
From the definitions, it is clear that $\sum_{n=1}^m \beta_n\zeta_{n+1}$ is a martingale. We will first prove that it is $L^2$-bounded.

By definition $\E[\zeta_{n+1}]=0$ and thus, using \eqref{R8} along with the observation that $\psi_n$ is bounded by 1, we get for $n\ge 1$ 
\[\begin{split}
\E[\beta_n^2\zeta_{n+1}^2]&\le \E[\beta_n^2\nrm{\CV_{n+1}}^2\Ind_{\{\nrm{\CY_{n+1}}^\alpha\le n\}}]\\
&\le \E[\beta_n^2\nrm{\CY_{n+1}}^2\Ind_{\{\nrm{\CY_{n+1}}^\alpha\le n\}}]\\
&\le \E[\beta_n^2n^{\frac{2}{\alpha}}\Ind_{\{\nrm{\CY_{n+1}}^\alpha\le n\}}]\\
&\le \E[\beta_n^2n^{\frac{2}{\alpha}}\Ind_{\{\nrm{\CY_{1}}^\alpha\le n\}}]\end{split}\]
since under $(J2)$, $\{\CY_n:n\ge 1\}$ are i.i.d.
In view of assumption \eqref{L2b},  $\beta_n^2n^{\frac{2}{\alpha}}\le D^2$ and hence we have
\[\begin{split}
\sum_n \E[\beta_n^2\zeta_{n+1}^2]&\le  D^2 \P(\nrm{\CY_{1}}^\alpha\le n)\\
&<\infty\end{split}\]
in view of \eqref{S29}.

Hence $\sum_{n=1}^m \beta_n\zeta_{n+1}$ is an $L^2$-bounded martingale and therefore converges almost surely. Thus
\be\label{S22}
\textstyle\sum_{n=1}^m\beta_n(\nrm{\CV_{n+1}}\Ind_{\{\nrm{\CY_{n+1}}^\alpha \le n\}}-\delta_n) \;\;\text{converges almost surely.}\ee
On the other hand, $\P(\nrm{\CY_{n+1}}^\alpha \ge n)= \P(\nrm{\CY_{1}}^\alpha \ge n)$ (since $\{\CY_n: n\ge 1\}$ are i.i.d.) and so the Borel–Cantelli lemma along with \eqref{L2a} says $\P(\nrm{\CY_{1}}^\alpha \ge n\text{ i.o.})=0$ and hence
\be\label{S23}
\textstyle\sum_{n=1}^m\beta_n(\nrm{\CV_{n+1}}\Ind_{\{\nrm{\CY_{n+1}}^\alpha > n\}}) \;\;\text{converges almost surely.}\ee
Combining \eqref{S22} and \eqref{S23} we again conclude that \eqref{S18} holds, and hence as in the proof above for $(J1)$, it follows that \eqref{V7} holds, completing the proof when $(J2)$ holds. \\

\textbf{Step 4:}\\
Let us now assume that $(J3)$ holds. Let $\alpha\in(1,2]$ be such that \eqref{L3}, \eqref{L4} hold. Note that $\E[\beta_n\nrm{\CY_{n+1}}\Ind_{\{\beta_n\nrm{\CY_{n+1}}> 1\}}]\le \beta_n^\alpha\E[\nrm{\CY_{n+1}}^\alpha]$ $\forall n\ge 1$. As a consequence, 
\[
\begin{split}
\E\left[\textstyle\sum_{n=0}^\infty\beta_n\nrm{\CV_{n+1}}\Ind_{\{\beta_n\nrm{\CY_{n+1}}> 1\}}\right]&\le \E\left[\textstyle\sum_{n=0}^\infty\beta_n\nrm{\CY_{n+1}}\Ind_{\{\beta_n\nrm{\CY_{n+1}}> 1\}}\right]\\
&\le \textstyle\sum_{n=0}^\infty\beta_n^\alpha\E[\nrm{\CY_{n+1}}^\alpha]\\
&\le \textstyle[\sup_n\E[\nrm{\CY_{n+1}}^\alpha]\left[\sum_{k=0}^\infty\beta_k^\alpha\right]\\
&<\infty\end{split}\]
using \eqref{L3}, \eqref{L4}. Hence
\be\label{R11} \textstyle\sum_{n=0}^\infty\beta_n\nrm{\CV_{n+1}}\Ind_{\{\beta_n\nrm{\CY_{n+1}}> 1\}}<\infty \;\;\text{almost surely.}\ee
For $n\ge 1$, let $\delta_n=\E(\nrm{\CV_{n+1}}\Ind_{\{\beta_n\nrm{\CY_{n+1}}\le 1\}}).$ Then
\be\label{R14}\delta_n\le\E(\nrm{\CV_{n+1}}) \le \E(\nrm{\CY_{n+1}})\le \epsilon.\ee
Let us note that (using the definition of $\delta_n$ in the first step, and \eqref{R8} in the second step)
\be\label{R17}
\begin{split}
\textstyle\sum_{n=0}^\infty\beta_n^2\E[(\nrm{\CV_{n+1}}\Ind_{\{\beta_n\nrm{\CY_{n+1}}\le 1\}}-\delta_n)^2]&\le \textstyle\sum_{n=0}^\infty\E[\beta_n^2(\nrm{\CV_{n+1}}\Ind_{\{\beta_n\nrm{\CY_{n+1}}\le 1\}})^2]\\
&\le \textstyle\sum_{n=0}^\infty\E[\beta_n^2\nrm{\CY_{n+1}}^2\Ind_{\{\beta_n\nrm{\CY_{n+1}}\le 1\}}]\\
&\le \textstyle\sum_{n=0}^\infty\E[\beta_n^\alpha\nrm{\CY_{n+1}}^\alpha\Ind_{\{\beta_n\nrm{\CY_{n+1}}\le 1\}}]\\
&\le \textstyle [\sup_{n\ge 0}\E(\nrm{\CY_{n+1}}^\alpha)]\left[\sum_{n=0}^\infty\beta_n^\alpha\right]\\
&<\infty
\end{split}
\ee
where we have used \eqref{L3}, \eqref{L4} and \eqref{U24} at the last step.
Since $\zeta_{n+1}=(\nrm{\CV_{n+1}}\Ind_{\{\beta_n\nrm{\CY_{n+1}}\le 1\}}-\delta_n)$ is a sequence of independent random variables with mean 0, \eqref{R17} implies that $\sum_{k=0}^n\beta_k\zeta_{k+1}$ is an $L^2$-bounded martingale and hence
\[ \textstyle\sum_{n=0}^m\beta_n(\nrm{\CV_{n+1}}\Ind_{\{\beta_n\nrm{\CY_{n+1}}\le 1\}}-\delta_n) \;\;\text{converges almost surely.}\]
Along with \eqref{R11}, this yields
\be\label{R18}
\textstyle\sum_{n=0}^\infty\beta_n(\nrm{\CV_{n+1}}-\delta_n) \;\;\text{converges almost surely.}\ee
Invoking \eqref{R10}, \eqref{R14} and \eqref{R18}, it follows that \eqref{V7} holds, completing the proof when $(J3)$ holds.

\end{document}